\theoremstyle{plain} 
\newtheorem{theorem}{\indent\sc Theorem}[section]
\newtheorem{lemma}[theorem]{\indent\sc Lemma}
\newtheorem{corollary}[theorem]{\indent\sc Corollary}
\newtheorem{conjecture}[theorem]{\indent\sc Conjecture}
\theoremstyle{definition} 
\newcommand{\Z}{\mathbb{Z}}
\newcommand{\R}{\mathbb{R}}
\newcommand{\B}{\mathcal{B}}
\newcommand{\cone}{\mathrm{cone}}
\def\rddots{\cdot^{\cdot^{\cdot}}}
\begin{document}

\title{Strong factorization and the braid arrangement fan}

\author[J. Machacek]{John Machacek$^*$}

\subjclass[2010]{Primary 14M25; Secondary 06A07, 52B20}
\keywords{ 
Toric varieties, factorization of birational maps, posets.
}

\thanks{ 
$^{*}$With support of the York Science Fellowship.
}

\address{
Department of Mathematics and Statistics\endgraf
York University\endgraf
Toronto, Ontario M3J 1P3\endgraf
Canada
}
\email{machacek@york.ca}
\maketitle

\begin{abstract}
We establish strong factorization for pairs of smooth fans which are refined by the braid arrangement fan.
Our method uses a correspondence between cones and preposets.
\end{abstract}

\section{Introduction}
We use the combinatorics of the braid arrangement fan to study toric varieties coming from fans refined by the braid arrangement fan.
The braid arrangement fan is the fan whose maximal cones are the Weyl chambers of type $A$.
Our main result is a proof of Oda's strong factorization conjecture in the special case where we are given two smooth fans that are refined by the braid arrangement fan.
We now recall Oda's strong factorization conjecture is its combinatorial form.

\begin{conjecture}[\cite{Oda}]
Given two smooth fans $\Sigma_1$ and $\Sigma_2$ with the same support, there exists a third fan $\Sigma_3$ which can be obtain from both $\Sigma_1$ and $\Sigma_2$ by sequences of smooth star subdivisions.
\label{conj:Oda}
\end{conjecture}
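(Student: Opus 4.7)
The plan is to prove Conjecture~\ref{conj:Oda} in the setting announced in the introduction: both smooth fans $\Sigma_1$ and $\Sigma_2$ are refined by the braid arrangement fan $\B$ on $[n] = \{1, 2, \ldots, n\}$. The key is to translate cones, smoothness, and star subdivisions into combinatorial operations on preposets, as flagged in the abstract.

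Step~1 (Dictionary). To a cone $\sigma$ refined by $\B$ I would associate the preposet $P_\sigma$ on $[n]$ defined by $i \preceq j$ iff $x_i \leq x_j$ for every $x \in \sigma$; conversely each preposet determines such a cone $\sigma_P$. The rays of $\sigma_P$ are indicator vectors $e_S$ for certain subsets $S \subseteq [n]$ that are read off from the order-ideal structure of $P$. I would push the correspondence to the point of characterizing (i) smoothness of $\sigma_P$ as a combinatorial condition on its ray-family of subsets (a chain-or-nested-family condition), and (ii) a smooth star subdivision along $e_S$ as a specified refinement of $P_\sigma$ that inserts $S$ into the ray-family.

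Step~2 (Common target). Construct a candidate common smooth refinement $\Sigma_3$. The coarsest common refinement $\Sigma_1 \wedge \Sigma_2$, whose cones are intersections $\sigma_1 \cap \sigma_2$, has the preposet description $P_{\sigma_1 \cap \sigma_2} = \langle P_{\sigma_1} \cup P_{\sigma_2} \rangle$ and is again refined by $\B$ but is generally not smooth. I would canonically smooth each of its cones using only the ray data of $\Sigma_1$ and $\Sigma_2$ jointly, in a manner symmetric in the two inputs, and package the result into a single fan $\Sigma_3$. The symmetry is what distinguishes a candidate for \emph{strong} rather than merely weak factorization.

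Step~3 (Smooth star subdivisions). Show that $\Sigma_3$ is reachable from $\Sigma_1$ by smooth star subdivisions, added one ray at a time in an order compatible with inclusion of the indexing subsets. The preposet framework should reduce each smoothness check at an intermediate stage to a combinatorial check on chains or nested families of subsets, while the inclusion ordering prevents obstructions from appearing. By symmetry the same construction produces a sequence from $\Sigma_2$ to $\Sigma_3$. The main obstacle I anticipate lies in Step~2: strong factorization forces \emph{both} sequences to land on the same $\Sigma_3$, so the smoothing procedure must be canonically determined by $\Sigma_1$ and $\Sigma_2$ together, and one must verify that the locally smoothed cones glue into a genuine fan. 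The preposet dictionary is precisely the tool that should make this verification combinatorial and hence tractable, which is why the hypothesis that both fans are refined by $\B$ is essential.
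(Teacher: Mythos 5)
There is a genuine gap: your proposal is an outline whose two load-bearing steps are exactly the parts left unspecified. In Step~2 you ask for a ``canonical smoothing'' of the common coarsening $\Sigma_1\wedge\Sigma_2$ that is symmetric in the inputs, but you never say what it is; producing such a canonical target reachable from both sides is precisely the content of strong factorization, so deferring it to an unspecified construction proves nothing. The paper sidesteps this entirely by observing that the common target can always be taken to be $\B(n)$ itself: since both fans are coarsenings of the braid fan, and $\B(n)$ is smooth, it suffices to show that any complete smooth fan refined by $\B(n)$ can be subdivided down to $\B(n)$ by smooth star subdivisions (Theorem~\ref{thm:main}), and then $\Sigma_3=\B(n)$ works for every pair. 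Your proposal never identifies this target, which is the key idea.

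Step~3 has the same problem in a sharper form. Adding rays of $\B(n)$ ``one at a time in an order compatible with inclusion'' is not automatically a sequence of \emph{star} subdivisions: a smooth star subdivision is taken relative to a face $\tau$ of the current fan, and the ray it introduces is forced to be the sum of the ray generators of $\tau$. So for each new ray $e_B$ you must exhibit a face of the current fan whose ray generators sum to $e_B$, and then check the result is still smooth and still refined by $\B(n)$. In the paper this is done concretely: one picks a maximal cone whose tree poset is not a linear order, takes a vertex $b$ of down-degree $k>1$ with lower covers $a_1,\dots,a_k$, contracts to the face $\tau$ with Hasse diagram $\{A_i\to B\}$, and computes via Lemma~\ref{lem:ray} that $e_{\overline{A_1}}+\cdots+e_{\overline{A_k}}=e_B$ in $N=\Z^n/\Z e_{[n]}$, after which the new maximal cones are identified by their tree posets $D'$. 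None of this verification (choice of face, the lattice computation modulo $e_{[n]}$, smoothness and $\B(n)$-refinement of the result, and termination of the iteration) appears in your argument; asserting that the preposet dictionary ``should'' make the checks tractable and that the inclusion ordering ``prevents obstructions'' is not a proof. Your Step~1 dictionary does match the paper's setup, but Steps~2 and~3 need to be replaced by an explicit target and an explicit inductive subdivision step of the kind above.
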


Given a birational map we can consider the problem of finding a \emph{factorization} of the map.
A \emph{strong factorization} consists of a sequence of blow-ups followed by a sequence of blow-downs.
A \emph{weak factorization} allows blow-ups and blow-downs in any order.
For a birational map between two smooth complete varieties over an algebraically closed field of characteristic zero a weak factorization exists~\cite{AKMW, Wlod2003}.
The existence of a strong factorization is an open problem.

Equivariant versions of the factorization problems for smooth toric varieties were conjectured in~\cite{Oda} and became known as ``Oda's strong factorization conjecture'' and ``Oda's weak factorization conjecture.''
As with many problems in toric geometry, Oda's conjectures can be phrased in terms of convex geometry as we do in Conjecture~\ref{conj:Oda}.
In these terms blow-ups become star subdivisions.
Oda's weak factorization conjecture has been solved by W\l odarczyk~\cite{Wlod1997} and Morelli~\cite{Morelli}.
Oda's strong factorization conjecture is still open.
Da Silva and Karu~\cite{DaSK2011} propose an algorithm to produce a strong factorization from a weak factorization, but this proposed algorithm is not guaranteed to terminate.

When refining a fan to produce a factorization we need to have some way a bounding the complexity of the fans that will arise in order to avoid the problem of the process not terminating.
In our case we find that all refinements produced will be coarsenings the braid arrangement fan.
In Section~\ref{sec:braid} we review the braid arrangement fan and its combinatorics. 
Section~\ref{sec:main} contains the proof on our main result that establishes Conjecture~\ref{conj:Oda} in the case that $\Sigma_1$ and $\Sigma_2$ are refined by the braid arrangement fan.

\section{The braid arrangement fan}
\label{sec:braid}
Let $[n] = \{1,2,\dots,n\}$ and consider the lattice $\mathbb{Z}^n \subseteq \mathbb{R}^n$ with standard basis $\{e_i : i \in [n]\}$.
For any $A \subseteq [n]$ we let $e_A = \sum_{i \in A} e_i$.
We then take the lattice $N = \mathbb{Z}^n / \mathbb{Z} e_{[n]}$ and vector space $N_{\mathbb{R}} = \mathbb{R} \otimes_{\mathbb{Z}} N$.
For any $1\leq i < j \leq n$ let $H_{ij}$ denote the hyperplane in $N_{\mathbb{R}}$ defined by $x_i = x_j$.
The \emph{braid arrangement} is the hyperplane arrangement consisting of the hyperplanes $\{H_{ij}\}_{1 \leq i < j \leq n}$.
We let $\B(n)$ denote the \emph{braid arrangement fan} in $N_{\mathbb{R}}$.
The maximal cones in $\B(n)$ are Weyl chambers of type $A_{n-1}$.
We let $\Pi(n)$ denote the \emph{permutahedron}.
This is a lattice polytope with whose normal fan is $\B(n)$.
The vertices of $\Pi(n)$ are all permutations of the vector $(1,2,\dots, n)$.
The rays in $\B(n)$ are $e_{A}$ for each $A \subseteq [n]$ such that $A \not\in \{\emptyset, [n]\}$.
The permutahedron $\Pi(3)$ and the fan $\B(3)$ with rays label by minimal lattice points are shown in Figure~\ref{fig:Pi3}.
\begin{figure}
    \centering
    \begin{tikzpicture}
    \node (0) at (0,0) {};
    \node (1) at (-3.464,-2) {};
    \node at (-3.7,-2.15) {$e_{\{1\}}$};
    \node (2) at (3.464,-2) {};
    \node at (3.7,-2.15) {$e_{\{2\}}$};
    \node (3) at (0,4) {};
    \node at (0,4.2) {$e_{\{3\}}$};
    \node (12) at (0,-4) {};
    \node at (0,-4.2) {$e_{\{1,2\}}$};
    \node (13) at (-3.464,2) {};
    \node at (-3.7,2.15) {$e_{\{1,3\}}$};
    \node (23) at (3.464,2) {};
    \node at (3.7,2.15) {$e_{\{2,3\}}$};
    \draw[-{latex}] (0,0) to (1);
    \draw[-{latex}] (0,0) to (2);
    \draw[-{latex}] (0,0) to (3);
    \draw[-{latex}] (0,0) to (12);
    \draw[-{latex}] (0,0) to (13);
    \draw[-{latex}] (0,0) to (23);
    \draw[ultra thick] (1.5,2.598) -- (3,0) -- (1.5,-2.598)-- (-1.5,-2.598) -- (-3,0) -- (-1.5,2.598) --  (1.5,2.598);

    \node[draw,circle,fill=black,scale=0.35] at (1.5,2.598) {};

    \node[rotate=-30] at (1.7,2.78) {$(1,2,3)$};
    \node[draw,circle,fill=black,scale=0.35] at (3,0) {};

    \node at (3.7,0) {$(1,3,2)$};
    \node[draw,circle,fill=black,scale=0.35] at (1.5,-2.598) {};

    \node[rotate=30] at (1.7,-2.78) {$(2,3,1)$};
    \node[draw,circle,fill=black,scale=0.35] at (-1.5,-2.598) {};

    \node[rotate=-30] at (-1.7,-2.78) {$(3,2,1)$};
    \node[draw,circle,fill=black,scale=0.35] at (-3,0) {};

    \node at (-3.7,0) {$(3,1,2)$};
    \node[draw,circle,fill=black,scale=0.35] at (-1.5,2.598) {};

    \node[rotate=30] at (-1.7,2.78) {$(2,1,3)$};
    \end{tikzpicture}
    \caption{The permutahedron $\Pi(3)$ and its normal fan $\B(3)$.}
    \label{fig:Pi3}
\end{figure}
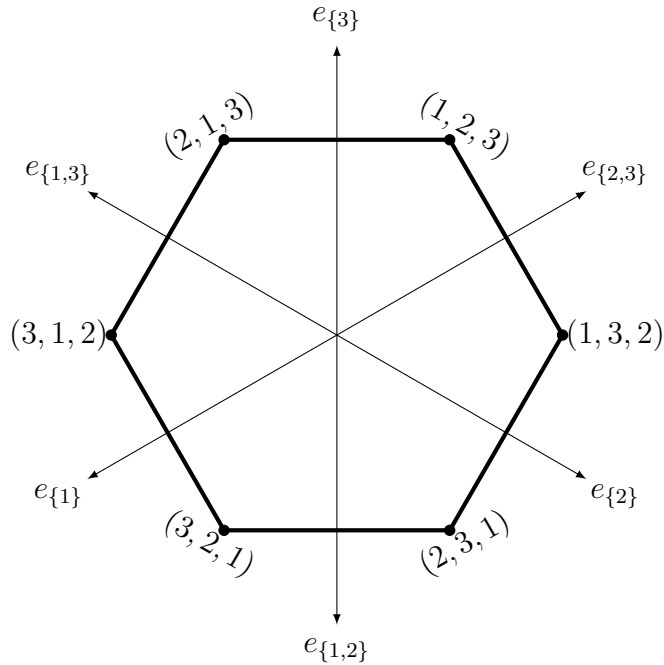

A \emph{preposet} is a binary relation which is reflexive and transitive. 
We will use either ordered pairs $(i,j)$ or $i \preceq j$ to denote relations in preposets.
Postnikov, Reiner, and Williams have given a correspondence between cones in fans which are refined by $\B(n)$ and preposts on $[n]$~\cite[Section 3]{PRW}.
When not otherwise stated we will assume that any preposet has $[n]$ as its underlying set.
The correspondence works by observing that any cone in such a fan is determine by inequalities of the form $x_i \leq x_j$ where $(x_1, x_2, \dots, x_n)$ are coordinates for $N = \Z^n/\Z e_{[n]}$.
Hence, $x_i \leq x_j$ exactly maps to $i \preceq j$.
For cones which are not of maximal dimension there will be pairs $x_i \leq x_j$ and $x_j \leq x_i$ giving equality $x_i = x_j$.
A poset is a prepost which is also antisymmetric.
A poset is called a \emph{tree poset} if its Hasse diagram is a tree.
Recall the Hasse diagram of a poset has a vertex for each element of the poset and a directed edge $i \to j$ for each covering relation $i \prec j$.

Any preposet determines an equivalence relation on $[n]$ by declaring $i \sim i$ for all $i \in [n]$ and $i \sim j$ for $i \neq j$ whenever $i \preceq j$ and $j \preceq i$.
For any preposet we define its \emph{Hasse diagram} to be the transitive reduction of the graph (with loops and multiple edges removed) which has a vertex for each equivalence class and a directed edge from the class of $a$ to the class of $b$ for each relation $a \preceq b$.
A preposet is called \emph{connected} if its Hasse diagram is connected.
We will assume throughout that all preposets we encounter are connected, this ensures that the cones they define are strongly convex~\cite[Proposition 3.5 (7)]{PRW}.
When drawing a Hasse diagram we will omit the direction of the edges and draw so that each edge should be oriented upward.
A \emph{tree preposet} is defined to be a preposet whose Hasse diagram is a tree.
Maximal cones will be labeled by posets.
In general the dimension of a cone labeled by a preposet will be one less than the number of equivalence classes on $[n]$ the preposet determines.

For a cone $\sigma$ let $\sigma(1)$ denote the set of minimal lattice points of its ray generators.
Give a set of lattice vectors $A \subseteq N$ we let
\[\cone(A) = \{\sum_{v \in A} \lambda_v v : \lambda_v \in \R, \lambda_v \geq 0\} \subseteq N_{\R}\]
and thus $\sigma = \cone(\sigma(1))$.
If $\sigma(1)$ can be extended to a basis of lattice $N$, then $\sigma$ is called \emph{smooth}.
A fan inherits the adjective smooth if each cone in the fan is smooth.
Consider a cone $\tau$ in a smooth fan $\Sigma$.
Let  $\tau(1) = \{v_1, v_2, \dots, v_k\}$ and set $v_0 = v_1 + v_2 + \dots + v_k$.
For any $\tau \subseteq \sigma \in \Sigma$ the \emph{(smooth) star subdivision} of the cone $\sigma$ relative to $\tau$ is the fan
\[\Sigma^*_{\sigma}(\tau) = \{\cone(A) : A \subseteq \sigma(1) \cup \{v_0\}, \tau(1) \not\subseteq A\}.\]
The \emph{star subdivision of the fan $\Sigma$} relative to $\tau$ is
\[\Sigma^*(\tau) = \{\sigma \in \Sigma : \tau \not\subseteq \sigma\} \cup \bigcup_{\tau \subseteq \sigma} \Sigma^*_{\sigma}(\tau).\]

A fan refined by $\B(n)$ is smooth if and only if each maximal cone is labeled by a tree poset~\cite[Corollar 3.10]{PRW}.
In the smooth case, containment of cones can be found by contracting edges in Hasse diagrams~\cite[Proposition 3.5 (2)]{PRW}.
When we contract an edge in a Hasse diagram we merge the equivalence classes labeling to the two vertices of the edge.
For a smooth fan each cone will be labeled by a tree preposet.
Figure~\ref{fig:tree} shows the Hasse diagram of a poset indexing a smooth cone along with all its contractions.
The poset on the left indexes the $3$-dimensional cone $\sigma$ defined by $x_4 \leq x_2$, $x_2 \leq x_1$, and $x_2 \leq x_3$.
The other preposets in Figure~\ref{fig:tree} index cones arising from intersecting $\sigma$ with one or more hyperplanes from $\{H_{12}, H_{23}, H_{24}\}$.
We record the two facts from~\cite{PRW} aforementioned in this paragraph in forms we will make make use of them.

\begin{lemma}
A fan refined by $\B(n)$ is smooth if and only if each maximal cone is labeled by a tree poset.
\label{lem:smooth}
\end{lemma}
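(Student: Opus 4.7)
The statement is essentially a restatement of \cite[Corollary 3.10]{PRW} under the correspondence recalled earlier in this section, so my plan would be to reduce to, and then sketch, that result in the notation of the paper.

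First I would unpack what a maximal cone $\sigma$ refined by $\B(n)$ looks like in terms of its indexing preposet. Since $\sigma$ is top-dimensional, none of the defining inequalities $x_i\leq x_j$ collapse to equalities, so the indexing preposet is antisymmetric, i.e.\ a poset $P$ on $[n]$; and the cone is $\sigma_P=\{x\in N_\R : x_i\leq x_j\text{ for all } i\preceq_P j\}$, which is $(n-1)$-dimensional. Next I would identify the rays: using the description of faces via contractions of the Hasse diagram (recalled from \cite[Proposition 3.5(2)]{PRW}), the rays correspond to the codimension-$(n-2)$ contractions, i.e.\ contractions down to two equivalence classes. These are in bijection with the connected order filters $A\subsetneq[n]$ of $P$, and the primitive ray generator is $e_A$.

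With the rays identified, the smoothness of $\sigma_P$ becomes the condition that the multiset $\{e_A : A\text{ a connected order filter of }P,\ A\neq[n]\}$ forms a $\Z$-basis of $N=\Z^n/\Z e_{[n]}$. For the ``only if'' direction, smoothness forces $\sigma_P$ to be simplicial, hence the number of connected order filters of $P$ must equal $n-1$; a counting argument shows that a poset on $[n]$ has exactly $n-1$ connected order filters exactly when its Hasse diagram is a tree (in general a connected poset whose Hasse diagram has $c$ covering relations has at least $c$ connected filters coming from the cones over each edge, and extra filters appear as soon as the Hasse diagram contains a cycle).

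For the ``if'' direction, I would induct on $n$ by peeling a leaf $\ell$ of the tree Hasse diagram of $P$. Deleting $\ell$ leaves a tree poset $P'$ on $n-1$ elements whose $n-2$ connected order filters, by induction, give a basis of $\Z^{n-1}/\Z e_{[n-1]}$. Depending on whether $\ell$ is a minimum or a maximum of $P$, the leaf contributes the additional filter $\{\ell\}$ or a filter of the form $F\cup\{\ell\}$ for some connected filter $F$ of $P'$; either way the resulting $n-1$ vectors are seen to differ from $\{e_1,\ldots,\widehat{e_\ell},\ldots,e_n\}$ by a unimodular change of basis of $N$. The main obstacle is bookkeeping this change of basis cleanly so that the induction step really lands in $\mathrm{GL}_{n-1}(\Z)$ rather than $\mathrm{GL}_{n-1}(\Q)$; handling both the ``leaf is maximal'' and ``leaf is minimal'' cases symmetrically, using $e_{[n]}\equiv 0$, is the one spot that requires care.
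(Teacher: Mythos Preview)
The paper does not actually prove this lemma: it is stated without proof as a direct citation of \cite[Corollary~3.10]{PRW}, introduced in the surrounding paragraph as one of two facts ``recorded'' from that reference. Your opening sentence correctly identifies this, so at the level of matching the paper you could simply stop there.

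What you go on to do---sketching the argument behind the cited result---is additional content beyond the paper. The outline is broadly sound and follows the spirit of \cite{PRW}: identify rays of $\sigma_P$ with certain upsets of $P$, observe that simpliciality forces exactly $n-1$ of them, and then check unimodularity by induction on a leaf. Two places deserve tightening if you pursue this. First, your description of rays as ``connected order filters'' needs the complementary condition as well (both the filter and its complement must induce connected subgraphs of the Hasse diagram for the two-block partition to arise from edge contractions); as written, the bijection is not quite right for non-tree $P$, which matters in the ``only if'' direction. Second, the counting argument (``extra filters appear as soon as the Hasse diagram contains a cycle'') is asserted rather than shown; a cleaner route is to note that a connected Hasse diagram on $n$ vertices with $\geq n$ edges already fails simpliciality because each edge still contributes a facet hyperplane, giving too many facets for a simplicial $(n-1)$-cone.
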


\begin{lemma}
The faces of any smooth cone $\sigma$ in a fan refined by $\B(n)$ are all cones labeled by contractions of the Hasse diagram of the preposet labeling $\sigma$.
\label{lem:contract}
\end{lemma}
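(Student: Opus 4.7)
The plan is to exploit smoothness of $\sigma$, which by Lemma~\ref{lem:smooth} and the discussion preceding it means $\sigma$ is labeled by a tree preposet, in order to realize $\sigma$ as a \emph{simplicial} cone whose rays are indexed by the edges of its Hasse diagram. Once that is in hand, I would appeal to the elementary fact that the face lattice of a simplicial cone is the Boolean lattice on its ray generators.

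Concretely, suppose the tree preposet labeling $\sigma$ has equivalence classes $C_1,\dots,C_k$. Removing any edge of the Hasse diagram disconnects the tree into two subtrees; let $A \subset [n]$ be the union of classes lying in the lower subtree. The first step is to verify that $\sigma(1)$ consists exactly of the vectors $e_A$ obtained this way, producing exactly $k-1$ rays and matching the dimension $k-1$ of $\sigma$. The tree hypothesis is essential: an easy induction on the number of edges shows these $k-1$ lattice vectors are linearly independent and extend to a basis of $N$, and a direct check against the defining inequalities $x_i \leq x_j$ confirms that $\cone(\sigma(1))$ coincides with the cone carved out by the poset.

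With $\sigma$ realized as a simplicial smooth cone, every face $\tau$ of $\sigma$ has the form $\cone(\{e_{A_e} : e \in T\})$ for a unique subset $T$ of Hasse edges. The second step is to identify this $\tau$ with the cone labeled by the preposet obtained from the Hasse diagram of $\sigma$ by contracting the edges \emph{not} in $T$. To see this I would verify that turning each omitted covering relation into an equality $x_i = x_j$ precisely merges the two classes at the endpoints of that edge, and produces no further forced equalities. This yields the claimed bijection between faces of $\sigma$ and contractions of the Hasse diagram.

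The hard part is the bookkeeping in that last step: ensuring that after merging equivalence classes, the defining relations of $\tau$ recover exactly the covering relations of the contracted preposet with no spurious ones introduced by transitivity. The tree hypothesis is exactly what rules out such propagation, since any forced extra equality would close a cycle in the original Hasse diagram. This is also the point at which Lemma~\ref{lem:smooth} is indispensable, since for non-smooth cones the labeling preposet can have cycles and the clean correspondence with contractions breaks down.
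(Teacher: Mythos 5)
Your argument is essentially sound, but note that the paper itself offers no proof of this lemma: it is recorded as a quotation of \cite[Proposition 3.5 (2)]{PRW}, where the face--contraction correspondence is established for arbitrary (not necessarily smooth) cones in fans refined by $\B(n)$ by order-theoretic means. Your route instead leans on smoothness from the outset: rays indexed by Hasse edges, simpliciality, the Boolean face lattice of a simplicial cone, and then matching the face spanned by a subset of rays with the contraction of the complementary edges. This is precisely the machinery the paper develops separately in Lemma~\ref{lem:ray} (whose proof does not use the present lemma, so there is no circularity), so your proof is a legitimate self-contained substitute in the smooth case, at the price of not recovering the general statement of \cite{PRW} --- which the paper never needs. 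Your identification of the crux is also correct: contracting edges of a tree Hasse diagram forces no additional equalities, since a forced equality would mean a directed cycle after contraction, hence a second path (an undirected cycle) between two vertices of the original tree.

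One concrete slip: the ray generators are the indicator vectors of the \emph{upper} components, not the lower ones. With the convention $i \preceq j \leftrightarrow x_i \leq x_j$, the generator attached to an edge $a \to b$ is $v_{a \to b} = e_B$, where $B$ is the union of the classes in the component containing $b$ after the edge is deleted (Lemma~\ref{lem:ray}); for the chain $1 \prec 2 \prec 3$, for instance, the rays are $e_{\{3\}}$ and $e_{\{2,3\}}$. The vectors $e_A$ you propose, with $A$ the union of classes in the lower component, satisfy $e_A = -e_B$ in $N = \Z^n/\Z e_{[n]}$ and generate $-\sigma$ rather than $\sigma$; your own ``direct check against the defining inequalities'' would fail for them. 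This is an orientation slip rather than a conceptual gap: replace the lower component by the upper one and the remainder of your argument goes through unchanged.
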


\begin{figure}
    \centering
    \begin{tikzpicture}[scale=0.9]
    \node (topl) at (0,3) {$\{1\}$};
    \node[right of=topl] (topr) {$\{3\}$};
     \coordinate (cent) at ($(topl)!0.5!(topr)$);
     \node[below of=cent] (mid) {$\{2\}$};
      \node[below of=mid] (bot) {$\{4\}$};
      \draw[thick] (topl) -- (mid);
      \draw[thick] (topr) -- (mid);
      \draw[thick] (mid) -- (bot);
      
    \node (topl) at (2.5,3) {$\{1\}$};
    \node[right of=topl] (topr) {$\{3\}$};
    \coordinate (cent) at ($(topl)!0.5!(topr)$);
     \node[below of=cent] (bot) {$\{2,4\}$};
     \draw[thick] (topl) -- (bot) -- (topr);
      
      \node (top) at (5,3) {$\{1\}$};
      \node[below of=top] (mid) {$\{2,3\}$};
       \node[below of=mid] (bot) {$\{4\}$};
       \draw[thick] (top) -- (mid) -- (bot);
       
             \node (top) at (7,3) {$\{3\}$};
      \node[below of=top] (mid) {$\{1,2\}$};
       \node[below of=mid] (bot) {$\{4\}$};
       \draw[thick] (top) -- (mid) -- (bot);
       
       \node (top) at (9,3) {$\{1\}$};
       \node[below of=top] (bot) {$\{2,3,4\}$};
       \draw[thick] (top) -- (bot);
       
       \node (top) at (11,3) {$\{3\}$};
       \node[below of=top] (bot) {$\{1,2,4\}$};
       \draw[thick] (top) -- (bot);
       
       \node (top) at (13,3) {$\{1,2,3\}$};
       \node[below of=top] (bot) {$\{4\}$};
       \draw[thick] (top) -- (bot);
       
       \node (top) at (15,3) {$\{1,2,3,4\}$};
    \end{tikzpicture}
    \caption{The Hasse diagram of a tree poset and all its contractions.}
    \label{fig:tree}
\end{figure}
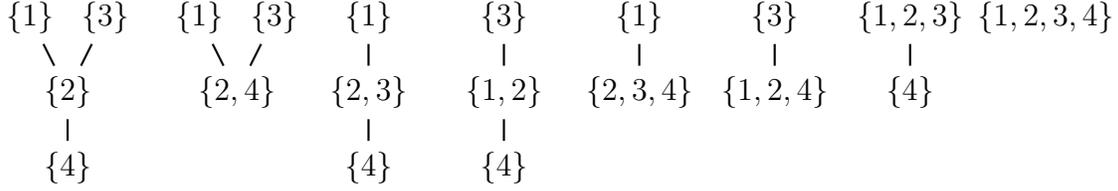

A \emph{linear order} is a poset in which any two elements are comparable.
Under our cone to poset correspondence, linear orders index the Weyl chambers.
Given a poset $P$ and $a \in P$, the \emph{down-degree} of $a$ is the number of edges $b \to a$ in the Hasse diagram while the \emph{up-degree} is the number of edges $a \to b$.
An \emph{upset} of a poset $P$ is a subset $A$ of $[n]$, the underlying set of $P$, such that if $a \in A$ and $b \in [n]$, then $b \in A$ whenever $a \preceq b$.
A \emph{downset} is defined analogously.
Note any element $a \in [n]$ generates an upset (downset) consisting of all elements greater than $a$ (less than $a$).
For any preposet $P$ we let $P^{op} = \{(j,i) : (i,j) \in P\}$.

\section{Smooth fans and strong factorization}
\label{sec:main}

Before proving our main theorem, we provide a lemma to describe the ray generators of a smooth cone labeled by a tree preposet.
It will be useful to us to have the ray generators explicitly described in terms of the Hasse diagram of a tree preposet.
We first define some notation.
Given a tree preposet $P$, for each edge $a \to b$ in its Hasse diagram we will associate an certain element of the lattice $N$ that we now describe.
If we remove the edge $a \to b$ from the Hasse diagram we obtain two connected components.
Let $B$ denote the union of all the equivalence classes of corresponding to the vertices in the component with $b$.
Then to the edge $a \to b$ we associate the lattice vector $v_{a \to b}$ which is defined as $v_{a \to b} := e_B$.

\begin{lemma}
Let $\sigma$ be a smooth cone labeled by a tree preposet $P$ with Hasse diagram $D$.
The ray generators of $\sigma$ are $\{v_{a \to b} : a \to b \text{ is an edge in } D\}$.
\label{lem:ray}
\end{lemma}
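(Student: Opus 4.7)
The plan is to combine Lemma~\ref{lem:contract} with the description of the rays of $\B(n)$ recalled in Section~\ref{sec:braid}: every ray of $\B(n)$ has minimal lattice generator $e_A$ for some proper nonempty $A \subseteq [n]$, so rays of $\sigma$ (which is refined by $\B(n)$) also have minimal lattice generators of this form, and the task reduces to identifying which subsets $A$ arise as ray generators of $\sigma$.

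First I would parametrize the rays of $\sigma$ by the edges of $D$. A ray is a one-dimensional face, and by the dimension-equals-number-of-classes-minus-one formula it is labeled by a preposet with exactly two equivalence classes. Via Lemma~\ref{lem:contract}, such a preposet is obtained from $P$ by contracting every edge of the tree $D$ except one; equivalently, by deleting a single edge $a \to b$ and taking the two resulting connected components as the new equivalence classes. This sets up a bijection between edges of $D$ and rays of $\sigma$, and it remains to identify the minimal lattice generator of the ray corresponding to $a \to b$ as $e_B$.

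The key step is to show that the set $B$ (the union of the equivalence classes of $P$ lying in the component of $D - \{a \to b\}$ that contains $b$) is an upset of $P$. For this I would argue that, in a tree Hasse diagram, the unique path between the classes realizing a relation $j \preceq k$ is traversed upward along every edge; if $j \in B$ and $k \notin B$, the path would be forced to cross the edge $a \to b$ in the downward direction, contradicting $a \prec b$. Granted the upset property, the indicator vector $e_B$ satisfies every defining inequality $x_i \leq x_j$ of $\sigma$, so $e_B \in \sigma$. The two-class preposet labeling the ray further forces the coordinates to be constant on $A = [n] \setminus B$ and constant on $B$, pinning the ray down to the half-line through $e_B$; minimality of $e_B$ on this ray is then immediate from the description of rays of $\B(n)$.

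The main obstacle I anticipate is the upset argument, which is where the tree hypothesis on $D$ is used essentially; once that is in place, the rest is simply unwinding the cone-to-preposet correspondence and matching the inequalities $x_i \leq x_j$ with the indicator vector $e_B$.
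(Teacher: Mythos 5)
Your argument is correct, and it reaches the same final computation as the paper --- the ray attached to the edge $a \to b$ is the locus where all coordinates are equal within $A$ and within $B$, and the remaining inequality across $a \to b$ forces the generator $e_B$ --- but you get there by a somewhat different route. The paper works with facets: for a maximal cone it uses smoothness to say there are $n-1$ facets (the hyperplanes $H_{ab}$ for edges $a\to b$) and $n-1$ rays, and identifies each ray as the intersection of the other $n-2$ facets, with lower-dimensional cones dispatched by a brief remark about projecting to a smaller space. You instead parametrize the rays through Lemma~\ref{lem:contract}: a ray is a face whose preposet has exactly two classes, hence comes from contracting all edges of the tree $D$ but one, which gives the edge--ray bijection uniformly in all dimensions and avoids the maximal-cone reduction entirely; this is a genuine (if modest) gain in cleanliness, at the cost of leaning on the face-contraction lemma where the paper leans on the facet count coming from smoothness. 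One remark: your upset argument for $B$ is correct (and is a nice explicit use of the tree hypothesis), but it is not actually needed --- once you know via Lemma~\ref{lem:contract} that the two-class contraction labels a face of $\sigma$, that face is automatically contained in $\sigma$, and its defining conditions (constant on $A$, constant on $B$, $x_i \le x_k$ across) already pin it down to $\{\lambda e_B : \lambda \ge 0\}$ in $N_{\R} = \R^n/\R e_{[n]}$, with $e_B$ primitive. Also a small slip of phrasing: it is the ambient fan, not the cone $\sigma$ itself, that is refined by $\B(n)$; the fact you want is that a one-dimensional face of a cone in such a fan is a ray of $\B(n)$, which holds but deserves a half-sentence if you keep that step.
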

\begin{proof}
Let us consider a maximal dimensional cone $\sigma$.
Cones of smaller dimension can be treated by applying the same argument projected onto a smaller dimension space.
Each edge $a \to b$ of the Hasse diagram gives a facet of the cone $\sigma$ on the hyperplane $H_{ab}$.
Since our cone is smooth it has $n-1$ facets and $n-1$ ray generators.
Each ray generator is obtained by the intersection of $n-2$ facets.
Choose any edge $a \to b$ of the Hasse diagram.
We will now show that $v_{a \to b}$ is the ray generator obtained by taking the intersection of the facets corresponding to the other $n-2$ edges.
Let $A$ denote the union of all vertices in the component of $a$ in the Hasse diagram with $a \to b$ removed.
Similarly, let $B$ denote the union of the vertices in the component with $b$ after removing $a \to b$.
The intersection of the $n-2$ hyperplanes corresponding to the other edges is the line defined by $x_i = x_j$ for $i,j \in A$ and $x_k = x_{\ell}$ for $k, \ell \in B$.
Considering again the edge $a \to b$ we see within the cone $\sigma$ that on this line $x_i \leq x_k$ for $i \in A$ and $k \in B$.
Therefore it follows that $v_{a \to b} = e_B$ is a ray generator for cone $\sigma$.
\end{proof}

\begin{theorem}
Let $\Sigma$ be a complete smooth fan refined by $\B(n)$, then there exists of sequence of fans
\[(\Sigma_0, \Sigma_1, \dots, \Sigma_{\ell})\]
such that $\Sigma_0 = \Sigma$, $\Sigma_{\ell} = \B(n)$, and $\Sigma_i$ is obtained from $\Sigma_{i-1}$ by a star subdivision for each $1 \leq i \leq \ell$.
\label{thm:main}
\end{theorem}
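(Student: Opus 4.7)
The plan is to induct on $\mu(\Sigma) = n! - |\{\text{maximal cones of } \Sigma\}|$. By Lemma~\ref{lem:smooth}, each maximal cone $\sigma$ of $\Sigma$ is labeled by a tree poset $P_\sigma$, and the Weyl chambers inside $\sigma$ are in bijection with the linear extensions of $P_\sigma$; because the Weyl chambers partition $N_\R$, these counts sum to $n!$. Hence $\mu(\Sigma) \geq 0$, with equality iff every $P_\sigma$ is a linear order, that is, $\Sigma = \B(n)$. The base case is immediate, so I assume $\mu(\Sigma) > 0$ and produce a star subdivision yielding a smooth fan still refined by $\B(n)$ with strictly smaller $\mu$.

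When $\mu(\Sigma) > 0$, some $P_\sigma$ fails to be a linear order, so its Hasse diagram contains a vertex $v$ of up-degree $\geq 2$ or down-degree $\geq 2$. I treat the up-degree case; the other is symmetric via the identity $e_{[n] \setminus W} = -e_W$ in $N$. Pick two covers $v \to u_1, v \to u_2$, let $U_i$ be the component of the Hasse diagram containing $u_i$ after deleting $v \to u_i$, and set $V = [n] \setminus (U_1 \cup U_2)$. By Lemma~\ref{lem:contract}, the two-dimensional face $\tau$ of $\sigma$ obtained by contracting every edge except $v \to u_1, v \to u_2$ is labeled by the V-shape preposet with edges $V \to U_1, V \to U_2$, and its ray generators are $e_{U_1}, e_{U_2}$ by Lemma~\ref{lem:ray}. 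Disjointness of $U_1, U_2$ makes the new ray $v_0 = e_{U_1} + e_{U_2} = e_{U_1 \cup U_2}$, which is a ray of $\B(n)$.

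The key claim is that $\Sigma^*(\tau)$ is smooth and refined by $\B(n)$. Any maximal cone $\sigma' \supseteq \tau$ is labeled by a tree poset $P'$ whose contraction is the V-shape on $\{V, U_1, U_2\}$; because $P'$ is a tree, there is a unique edge $a_1 \to b_1$ of $P'$ with $a_1 \in V, b_1 \in U_1$ and a unique edge $a_2 \to b_2$ with $a_2 \in V, b_2 \in U_2$, and by Lemma~\ref{lem:ray} these give the two rays of $\tau$. The two subcones produced by the star subdivision are then labeled by the tree posets
\[P'_{12} := (P' \setminus \{a_2 \to b_2\}) \cup \{b_1 \to b_2\}, \qquad P'_{21} := (P' \setminus \{a_1 \to b_1\}) \cup \{b_2 \to b_1\},\]
each of which is a tree on $[n]$ (delete a connecting edge to split off $U_i$, then reinsert an edge across the same cut) and a valid poset (since $b_1, b_2$ were incomparable in $P'$). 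A recomputation of ray generators via Lemma~\ref{lem:ray} confirms these match the star-subdivision formula. Lemma~\ref{lem:smooth} then yields smoothness, while tree-preposet labels on every cone give refinement by $\B(n)$. Since $\dim \tau = 2$, each $\sigma' \supseteq \tau$ splits into exactly two subcones, so $\mu$ strictly decreases and the induction closes.

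The main technical obstacle is the ray recomputation: one must check that the rays coming from edges internal to the subtrees on $V, U_1, U_2$ are unchanged between $P'$ and $P'_{12}$, while the modified connecting edges $a_1 \to b_1$ and $b_1 \to b_2$ in $P'_{12}$ yield the rays $e_{U_1 \cup U_2}$ and $e_{U_2}$ respectively --- producing exactly the ray set $(\sigma'(1) \setminus \{e_{U_1}\}) \cup \{e_{U_1 \cup U_2}\}$ prescribed by the star-subdivision formula, and similarly for $P'_{21}$.
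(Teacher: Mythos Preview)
Your overall strategy mirrors the paper's: locate a branching vertex in some tree poset labeling a maximal cone, pass to a face $\tau$, star-subdivide at $\tau$, and argue the result is still refined by $\B(n)$. The differences are minor: you use a two-dimensional $\tau$ (two branches) where the paper uses all $k$ branches of the chosen vertex, and you supply an explicit termination invariant $\mu(\Sigma)=n!-\#\{\text{max cones}\}$ that the paper leaves implicit.

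There is, however, a real gap in your ray recomputation for maximal cones $\sigma'\supseteq\tau$ other than the initial $\sigma$. You assert that rays coming from edges internal to $V$ are unchanged in passing from $P'$ to $P'_{12}$; this fails for any edge of $V$ lying on the tree path from $a_1$ to $a_2$ whenever $a_1\ne a_2$, because in $P'$ the block $U_2$ hangs off $a_2$ while in $P'_{12}$ it hangs (via $b_1$) off $a_1$. Concretely, take $n=4$ and $P_\sigma$ with edges $2\to1,\;1\to3,\;1\to4$; choosing $v=1$, $u_1=3$, $u_2=4$ gives $V=\{1,2\}$, $U_1=\{3\}$, $U_2=\{4\}$ and $\tau=\cone(e_{\{3\}},e_{\{4\}})$. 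The cone $\sigma'$ with poset $1\to2,\;2\to3,\;1\to4$ also has $\tau$ as a facet (contract $1\to2$), now with $a_1=2\ne1=a_2$. Star-subdividing $\sigma'$ at $\tau$ produces the cone with ray set $\{e_{\{2,3\}},e_{\{4\}},e_{\{3,4\}}\}$; since $\{2,3\}$ and $\{3,4\}$ cross, no tree poset on $[4]$ has these as its rays, so this cone is \emph{not} a union of Weyl chambers and $\Sigma^*(\tau)$ is not refined by $\B(4)$. Your candidate $P'_{12}$ is the chain $1\to2\to3\to4$, with rays $\{e_{\{2,3,4\}},e_{\{3,4\}},e_{\{4\}}\}$, which visibly do not match. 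One can complete $\{\sigma,\sigma'\}$ to a smooth complete fan refined by $\B(4)$ (e.g.\ adjoin the cones for $1\to3\to2,\,1\to4$ and $3\to1\to2,\,1\to4$ and leave the remaining Weyl chambers unmerged), so the obstruction is not vacuous. The paper, by contrast, carries out the ray check only for the originally chosen $\sigma$---where all branches attach to a single vertex $b$, so rerouting $a_i\to b$ to $a_i\to a_1$ never separates $a_1$ from $b$ and hence never disturbs any other edge's bipartition---and then asserts in one sentence that the choice of $\sigma$ was immaterial.
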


\begin{proof}
Let $\Sigma = \Sigma_i$ at some $i$ in the proposed sequence of fans.
Note that $\Sigma = \B(n)$ if and only if every maximal cone is label by a linear order.
So, assume some maximal cone is  not labeled by a linear order.
Since $\Sigma$ is smooth by Lemma~\ref{lem:smooth} this maximal cone of is labeled by a tree poset which is not a linear order.
Choose a cone $\sigma \in \Sigma$ labeled by a tree poset $P$ with Hasse diagram $D$ which is not a linear order.
This means in $D$ there is a vertex with either down-degree or up-degree strictly greater than $1$.
Let us assume we have a vertex $b$ with down-degree $k > 1$.
We may assume we are in the case of down-degree greater than $1$ since we could consider $P^{op}$ and the change of coordinates exchanging $x_i$ and $-x_i$ for $1 \leq i \leq n$.

Let $a_i \prec b$ for $1 \leq i \leq k$ be the covering relations in $P$ with $b$ as the greater element.
Now set $B$ to be the upset generated by $b$ and $A_i$ to be the downset generated by $a_i$ for each $1 \leq i \leq k$.
We can then contract the Hasse diagram of $P$ to a Hasse diagram of a tree preposet having vertex set $\{A_1, A_2, \dots, 
A_k, B\}$ and edge set $\{A_i \to B : 1 \leq i \leq k\}$.
By Lemma~\ref{lem:contract} this tree preposet indexes a face $\tau \subseteq \sigma$.
We will perform a star subdivision relative to $\tau$.

Let $A = A_1 \uplus A_2 \uplus \cdots \uplus A_k$ and let $\overline{A_i}$ denote the complement of $A_i$ in $[n]$.
By Lemma~\ref{lem:ray} the ray generators of $\tau$ are then $\{v_{A_i \to B} = e_{\overline{A_i}} : 1 \leq i \leq k\}$.
Hence, the star subdivision adds the ray generated by $e_B$ since
\begin{align*}
e_{\overline{A_1}} +  e_{\overline{A_2}}+ \cdots +e_{\overline{A_k}} &= ke_B + (k-1)e_A\\
&=e_B + (k-1)e_{[n]}\\
&=e_B.
\end{align*}

We now verify that $\Sigma^*(\tau)$ is still refined by $\B(n)$.
To do this we let
\[S = \{v_{i \to j} : i \to j \in D, i \to j \neq a_1 \to b\} \uplus \{e_B\}\]
and will show that $\sigma' = \cone(S)$ is the cone indexed by the poset with Hasse diagram $D'$ that has edges
\[\{i \to j \in D : j \neq b\} \uplus \{a_1 \to b\} \uplus \{a_i \to a_1 : 1 < i \leq k\}.\]
The labeling of the $\{a_i : 1 \leq i \leq k\}$ is arbitrary it suffices the consider $i = 1$ as we have in $\sigma'$.
Furthermore, it did not matter which cone $\sigma$ containing the face $\tau$ we originally chose.
A local picture of the Hasse diagrams $D$ and $D'$ can be found in Figure~\ref{fig:proofmain}.

Let $v'_{i \to j}$ denote the ray generators of $\sigma'$ corresponding to edges of $D'$.
We need to show
\[\{v'_{i\to j} : i \to j \in D'\} = S.\]
First note if $i \to j \in D$ and $i \to j \in D'$, then $v_{i \to j} = v'_{i \to j}$.
Next we see that $v_{a_i \to b} = v'_{a_i \to a_1}$ for $1 < i \leq k$.
Finally we observe that $e_B = v'_{a_1 \to b}$.

It follows that $\sigma'$ is indeed the cone indexed by the poset wit Hasse diagram $D'$.
Thus, $\Sigma^*(\tau)$ is refined by $\B(n)$.
Let $\Sigma_{i+1} = \Sigma^*(\tau)$.
The theorem is proven by iterating the process we have described.
\end{proof}

\begin{figure}
\begin{tikzpicture}
\node at (0,0.75) {$\vdots$};
\node (b) at (0,0) {$b$};
\node (a1) at (-1,-1) {$a_1$};
\node (a2) at (-0.5,-1) {$a_2$};
\node at (0.25,-1) {$\cdots$};
\node (ak) at (1,-1) {$a_k$};
\node at (-1,-1.5) {$\vdots$};
\node at (-0.5,-1.5) {$\vdots$};
\node at (1,-1.5) {$\vdots$};
\draw[thick] (b)--(a1);
\draw[thick] (b)--(a2);
\draw[thick] (b)--(ak);

\node at (4,1.25) {$\vdots$};
\node (b) at (4,0.5) {$b$};
\node (a1) at (4,-0.5) {$a_1$};
\node (a2) at (4,-1.5) {$a_2$};
\node at (4.75,-1.5) {$\cdots$};
\node (ak) at (5.5,-1.5) {$a_k$};
\node at (3.55,-0.9) {$\rddots$};
\node at (4,-2) {$\vdots$};
\node at (5.5,-2) {$\vdots$};
\draw[thick] (b)--(a1);
\draw[thick] (a1)--(a2);
\draw[thick] (a1)--(ak);
\end{tikzpicture}
\caption{Portions of the Hasse diagrams $D$ and $D'$ from the proof of Theorem~\ref{thm:main}.}
\label{fig:proofmain}
\end{figure}
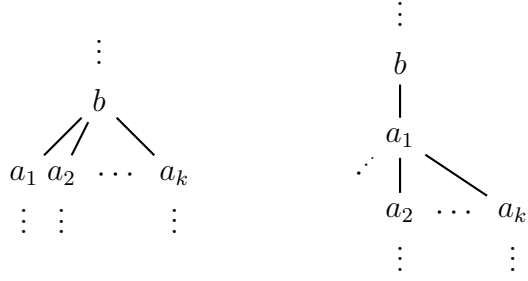

Theorem~\ref{thm:main} gives an affirmative solution to Conjecture~\ref{conj:Oda} in the special case we are considering.

\begin{corollary}
Conjecture~\ref{conj:Oda} holds whenever $\Sigma_1$ and $\Sigma_2$ are two complete smooth fans refined by $\B(n)$. Moreover, in this case the third fan $\Sigma_3$ can always be taken to be $\B(n)$.
\end{corollary}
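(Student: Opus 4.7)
The plan is to apply Theorem~\ref{thm:main} separately to each of $\Sigma_1$ and $\Sigma_2$ and then take $\Sigma_3 = \B(n)$. Conjecture~\ref{conj:Oda} requires the three fans to share support; since $\Sigma_1$, $\Sigma_2$, and $\B(n)$ are all complete fans in $N_\R$, this holds automatically. It also requires each star subdivision in the two sequences to be a \emph{smooth} star subdivision, which reduces to checking that every intermediate fan produced by Theorem~\ref{thm:main} is smooth.

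To verify this, I would inspect the construction in the proof of Theorem~\ref{thm:main}. Starting from a smooth fan refined by $\B(n)$, the star subdivision relative to $\tau$ replaces each maximal cone $\sigma \supseteq \tau$ by new cones indexed by preposets whose Hasse diagram $D'$ is obtained from the tree $D$ by redirecting the edges $a_i \to b$ (for $i > 1$) to point at $a_1$ instead. The resulting graph $D'$ is again a tree, so by Lemma~\ref{lem:smooth} each new maximal cone is smooth, and hence so is the full fan. By induction on the length of the sequence, every fan in the chain is a smooth fan refined by $\B(n)$. The terminal fan $\B(n)$ is itself smooth because each of its maximal cones is labeled by a linear order, whose Hasse diagram is a path and in particular a tree.

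With these observations in place, the argument concludes immediately: Theorem~\ref{thm:main} produces a sequence of smooth star subdivisions from $\Sigma_1$ to $\B(n)$ and another such sequence from $\Sigma_2$ to $\B(n)$, so $\Sigma_3 := \B(n)$ witnesses both halves of Conjecture~\ref{conj:Oda}, and this simultaneously yields the ``Moreover'' clause. There is essentially no obstacle here since Theorem~\ref{thm:main} does all of the combinatorial work; the only genuine task is the bookkeeping verification that smoothness is preserved at every stage of the sequence, which follows from the tree-preserving nature of the star subdivision constructed in the theorem.
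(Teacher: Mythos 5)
Your proposal is correct and follows the same route as the paper: the corollary is an immediate consequence of applying Theorem~\ref{thm:main} to each of $\Sigma_1$ and $\Sigma_2$ and taking $\Sigma_3 = \B(n)$, with smoothness of every intermediate fan guaranteed because each step of the construction keeps the fan refined by $\B(n)$ with maximal cones labeled by tree posets (Lemma~\ref{lem:smooth}). Your extra bookkeeping about completeness and preservation of smoothness is exactly the (implicit) content the paper relies on, so there is nothing to add.
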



\begin{thebibliography}{1}

\bibitem{AKMW}
\textsc{D.~Abramovich, K.~Karu, K.~Matsuki, and J.~W{\l}odarczyk},
Torification and factorization of birational maps,
J. Amer. Math. Soc. 15 (2002), no. 3, 531--572.

\bibitem{DaSK2011}
\textsc{S.~Da~Silva and K.~Karu},
On {O}da's strong factorization conjecture,
Tohoku Math. J. (2) 63 (2001), no. 2, 163--182.

\bibitem{Morelli}
\textsc{R.~Morelli},
The birational geometry of toric varieties,
J. Algebraic Geom. 5 (1994), no. 4, 751--782.

\bibitem{Oda}
\textsc{T.~Oda},
Torus embeddings and applications, Based on joint work with Katsuya Miyake, Tata Inst. Fund. Res.,
Lectures on Math. and Phys., Bombay, 1978.

\bibitem{PRW}
\textsc{A.~Postnikov, V.~Reiner, and L.~Williams},
Faces of generalized permutohedra,
Doc. Math. 13 (2008), 207--273.

\bibitem{Wlod1997}
\textsc{J.~W{\l}odarczyk},
Decomposition of birational toric maps in blow-ups \&\ blow-downs,
Trans. Amer. Math. Soc. 349 (1997), no.  1, 373--411.

\bibitem{Wlod2003}
\textsc{J.~W{\l}odarczyk},
Toroidal varieties and the weak factorization theorem,
Invent. Math. 154 (2003), no. 2, 223--331.
\end{thebibliography}
\end{document}